\documentclass[12pt,amstex]{article}
\usepackage[usenames]{color}

\usepackage{amscd}
\usepackage{amsmath}
\usepackage{amssymb}
\usepackage{graphicx}

\newtheorem {theo} {\bf Theorem} [section]
\newtheorem {prop} [theo] {\bf Proposition}

\newtheorem {lem} [theo] {\bf Lemma}
\newtheorem {defn} [theo] {\bf Definition}

\newtheorem {rem} [theo] {\bf Remark}
\newcommand{\QED}{\hfill \CaixaPreta \vspace{6mm}}
\def\CaixaPreta{\vrule Depth0pt height6pt width6pt}

\newcommand{\qed}{\nopagebreak\hfill{\vrule width6pt height6pt depth0pt}}
\newcommand{\be}{\begin{eqnarray}}
\newcommand{\ee}{\end{eqnarray}}
\newcommand{\benn}{\begin{eqnarray*}}
\newcommand{\eenn}{\end{eqnarray*}}
\newcommand{\bse}{\begin{equation}}
\newcommand{\ese}{\end{equation}}
\newcommand{\bsenn}{\begin{displaymath}}
\newcommand{\esenn}{\end{displaymath}}
\newcommand{\N}{\mathbb{N}}

\newcommand{\R}{\mathbb{R}}

\usepackage{graphicx}

\begin{document}

\title{Complete Expanding Interval Maps are Mixing}
\author{E.L. O'Neil, F.M. Tangerman, J.J.P. Veerman}
\maketitle

\noindent
\section*{Abstract}
We give a new and elementary\footnote{That is: without functional analysis.} proof that certain smooth, complete, expanding maps from the interval to itself
(Definition \ref{def:mix-CEIM}) are mixing.

\vskip 0.1in\noindent
KEYWORDS: Ergodicity, mixing, bounded nonlinearity.

%\vskip 0.4in\noindent
\begin{centering}
\section{Introduction}
\label{chap:intro}
\end{centering}
\setcounter{figure}{0} \setcounter{equation}{0}

The aim of this paper is to give an elementary proof that maps like the Gauss map,
$T:x\rightarrow 1/x-\lfloor 1/x\rfloor$, are mixing (Theorem \ref{thm:mix-mixing}). It is
known that this map is \emph{exponentially} mixing \cite{billingsley, CFS, viana-oli, zwei},
however the proofs are far from elementary. Instead, we give a comparatively simple proof of mixing.

In this introduction we set the scene, while the next section contains the proofs. First we fix
some notation. For an interval
map $T$, we write $\partial T(x)$ to indicate the derivative with respect to the argument at $x$, while
$\partial_sT(x)$ will indicate $T'(x)\frac{dx}{ds}$.

\begin{defn}[CEIM] Let $T:[0,1]\rightarrow [0,1]$ be $C^2$. We say $T$ is a complete expanding interval
map (CEIM, for short) if\\
(i) there are $C>0$ and $\Delta>1$ so that for all $x$,
\bse
|\partial T^n(x)|\geq C\Delta^n \,.
\label{eq:mix-expanding}
\ese
(ii) The unit interval can be partitioned into at most countably many (and at least two) intervals $I_i$
of positive length such that $T$ restricted to $I_i$ is onto $[0,1]$.\\
(iii) The nonlinearity of $T$ is bounded, that is: there is $M>0$ such that
\bsenn
\sup_{i}\sup_{z,u\in I_i}\,\left|\frac{\partial^2T(z)}{\partial T(z)\;\partial
T(u)}\right| \leq M\,,
\esenn
where the supremum is over the intervals defined in (ii).
\label{def:mix-CEIM}
\end{defn}

\noindent
The intervals $I_i$ will be called the first level fundamental
intervals. Any subsequent iterate of $T$ will then induce a finer partition of $[0,1]$. The intervals
on which the restriction of $T^2$ is a bijection onto $[0,1]$ will be called the second level
fundamental intervals, and so forth.

It is useful to recall that for a CEIM $T$, the nonlinearity of $T^n$ is also bounded: there is $D>0$
such that for all $n>0$:
\bse
\sup_{i}\sup_{z,u\in I_i}\,\left|\frac{\partial^2T^n(z)}{\partial T^n(z)\;\partial
T^n(u)}\right| \leq D \,,
\label{eq:distortion1}
\ese
where now $I_i$ ranges over the $n$th level fundamental intervals (\cite{veer}, Proposition 10.16). In
addition, for all $n$ and $x$ in  an $n$th level fundamental interval $I_i$:
\bse
D^{-1} \leq |\partial T^n(x)|\,|I_i|\leq D \,.
\label{eq:distortion2}
\ese

\begin{figure}[!ht]
\centering
\includegraphics[width=2.5in]{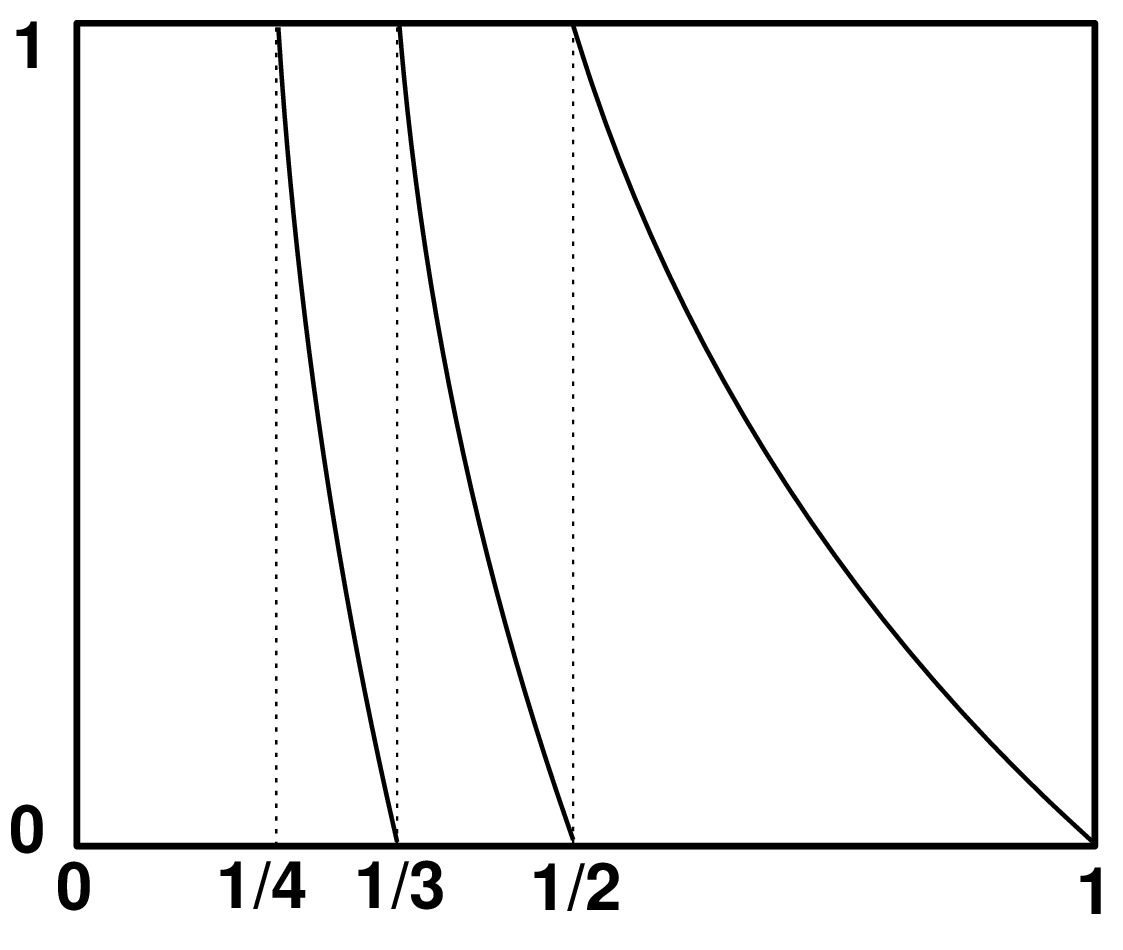}
\caption{\emph{A sketch of three branches of the Gauss map.}}
\label{fig:mix-Gauss}
\end{figure}

Clearly, for any integer $n>1$, the map $x\rightarrow nx \mod 1$ satisfies all requirements (its second
derivative being zero). It is an easy, but tedious, exercise to show that for $d\geq 1$, the map
$x\rightarrow x^{-d} \mod 1$ also satisfies all requirements. We leave this to the reader. Because of
its prominent role in number theory, the most important of these examples is of course the Gauss map
of which a few branches are sketched in Figure \ref{fig:mix-Gauss}.

\vskip -0.0in
It is well-known that a CEIM $T$ is ergodic with respect to an invariant measure $m$ that is absolutely
continuous with respect to the Lebesgue measure. We lift the following from \cite{veer}, appendix E,
where a proof can also be found.

\vskip 0.0in\noindent
\begin{theo} Let $T$ be a complete expanding interval map. Then there is a unique invariant
probability measure $m$ that has the following properties:\\
(i) $m$ has a continuous density $\mu$,\\
(ii) there is a $K>1$ such that for all $x\in [0,1]$, $K^{-1}\leq \mu(x) \leq K$,\\
(iii) $T$ is ergodic with respect to $m$.
\label{thm:mix-ergodic}
\end{theo}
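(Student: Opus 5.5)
We construct the density of $m$ as a limit of pushforwards of Lebesgue measure, controlling densities through the bounded distortion estimates (\ref{eq:distortion1}) and (\ref{eq:distortion2}). Let $\mathcal{L}$ be the transfer operator
\[
(\mathcal{L}f)(y)=\sum_{i} \frac{f(\phi_i(y))}{|\partial T(\phi_i(y))|},
\]
where $\phi_i:[0,1]\to I_i$ are the inverse branches. Its iterate is $\mathcal{L}^n f(y)=\sum_J f(\phi_J(y))/|\partial T^n(\phi_J(y))|$, with the sum over $n$th level fundamental intervals $J$ and $\phi_J$ the corresponding inverse branch of $T^n$. Set $f_n=\mathcal{L}^n 1$, the density of the pushforward of Lebesgue measure under $T^n$.

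\textbf{Step 1 (uniform bounds on $f_n$).} For each $J$, the term $1/|\partial T^n(\phi_J(y))|$ lies between $D^{-1}|J|$ and $D|J|$ by (\ref{eq:distortion2}). Since $\sum_J |J|=1$, this gives $D^{-1}\le f_n\le D$ for all $n$.

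\textbf{Step 2 (uniform Lipschitz bound on $\log f_n$).} Differentiating $\log|\partial T^n(\phi_J(y))|$ in $y$ gives $\partial^2T^n(z)/(\partial T^n(z))^2$ with $z=\phi_J(y)$. By (\ref{eq:distortion1}), with $u=z$, this is bounded by $D$. Each term of the sum therefore has logarithmic derivative bounded by $D$, and so does the (countable, uniformly convergent) sum. Termwise differentiation is justified by the uniform bounds. Hence each $f_n$ satisfies $f_n(y)\le e^{D|y-y'|}f_n(y')$.

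\textbf{Step 3 (existence).} The Cesàro averages $g_N=\frac1N\sum_{n<N} f_n$ inherit the same bounds and log-Lipschitz constant. By Arzelà--Ascoli a subsequence converges uniformly to a continuous $\mu$ with $D^{-1}\le\mu\le D$. Since $\mathcal{L}g_N-g_N=(f_N-f_0)/N\to0$ and $\mathcal{L}$ is continuous for uniform convergence (it preserves integrals and is positive), we get $\mathcal{L}\mu=\mu$. So $m=\mu\,dx$ is invariant, which gives (i) and (ii) with $K=D$.

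\textbf{Step 4 (ergodicity).} Let $A$ be invariant with $\mathrm{Leb}(A)>0$. By the Lebesgue density theorem, and because $n$th level fundamental intervals shrink uniformly (expansion (\ref{eq:mix-expanding}) gives $|J|\le D/(C\Delta^n)$), there are fundamental intervals $J$ with $\mathrm{Leb}(A\cap J)/|J|$ arbitrarily close to $1$. Transport by $T^n|_J$, which is onto $[0,1]$ with distortion bounded by a constant $e^{D}$ from (\ref{eq:distortion1}) integrated over $J$. Invariance gives $T^n(A\cap J)\subset A$, so $\mathrm{Leb}(A^c)\le e^{D}\,\mathrm{Leb}(J\setminus A)/|J|\to0$. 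Hence $A$ has full measure, and $m$ is ergodic. Uniqueness follows because any invariant probability absolutely continuous with respect to Lebesgue measure is equivalent to $m$, and distinct ergodic invariant measures are mutually singular. Any continuous-density invariant measure is such a measure.

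The main obstacle is Step 2 together with the interchange in Step 3: the branches are countably many, so the termwise differentiation and the passage of $\mathcal{L}$ through uniform limits must be handled with the uniform summability $\sum_J|J|=1$ rather than finiteness.
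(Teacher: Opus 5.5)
The paper itself does not prove this theorem; it imports it from \cite{veer}, Appendix E. Your proposal therefore has to stand on its own as a replacement, and it does: it is a correct version of the standard bounded-distortion proof. Steps 1--2 put every $\mathcal{L}^n1$ in the set $\{f:\ D^{-1}\le f\le D,\ f(y)\le e^{D|y-y'|}f(y')\}$, which is closed under convex combinations and uniform limits. Ces\`aro averaging plus Arzel\`a--Ascoli then produces the fixed point $\mu$. In Step 4, a density point of $A$ is viewed at the scale of an $n$th level fundamental interval and blown up by $T^n$ with distortion at most $e^{D}$; this gives ergodicity for Lebesgue, hence for $m$.

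Compared with the citation, your route buys two things. First, it is elementary in exactly the paper's sense (Arzel\`a--Ascoli, the density theorem, and Birkhoff only for uniqueness), so the paper would become self-contained. Second, it is quantitative: $K=D$, and $\log\mu$ is Lipschitz with constant $D$. That makes the modulus $\epsilon_b$ in Lemma \ref{lem:weights-ratio} Lipschitz.

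Two structural remarks. The Ces\`aro step is genuinely needed. Proposition \ref{prop:mix-decay} applied to $\phi=1-\mu$ does eventually give $\mathcal{L}^n1\to\mu$ uniformly, but only after normalizing by $\mu$, so it cannot be used to construct $\mu$. Conversely, (iii) is never actually used as an input in Section \ref{chap:mix}; only invariance, continuity and the two-sided bounds on $\mu$ are. Since mixing implies ergodicity, your Step 4 is a direct, independent route to a fact the paper could also recover a posteriori.

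A few points to tighten, none of which is a gap:
\begin{itemize}
\item In Step 2 you do not need termwise differentiation. Each term satisfies $t_J(y)\le e^{D|y-y'|}t_J(y')$, and this inequality survives summation.
\item In Step 3, the sup-norm continuity of $\mathcal{L}$ comes from positivity together with Step 1: $\|\mathcal{L}f\|_\infty\le\|\mathcal{L}1\|_\infty\|f\|_\infty\le D\|f\|_\infty$. Integral preservation gives only the $L^1$ contraction. That would also suffice, yielding $\mathcal{L}\mu=\mu$ a.e., which is all invariance requires.
\item In Step 4, the density theorem is usually stated for intervals centred at the point. For $J\ni x$ use $J\subset[x-|J|,x+|J|]$, which costs only a factor $2$.
\item For uniqueness, do not assert that every absolutely continuous invariant probability is equivalent to $m$; that is a consequence of uniqueness, not an input. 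All you need is $\nu\ll m$, which is immediate from $\mu\ge K^{-1}$. Then use either (iii) for $\nu$ together with mutual singularity of distinct ergodic measures, or Birkhoff's theorem for $m$, which forces any invariant $\nu\ll m$ to equal $m$.
\end{itemize}
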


Consider the following class of functions.

\vskip -0.1in\noindent
\begin{defn} A family of functions $\{f_n\}_{n=0}^\infty=\{j_n+h_n\}_{n=0}^\infty$ from $[0,1]$ to
$\R$ is called piecewise equicontinuous if:\\
(i) The functions $j_n$ are constant except for jump discontinuities\footnote{A jump discontinuity of $j$
at $x$ means that the right and left limit of $j$ at $x$ exist and are finite, but not equal.};
$j_n$ has bounded variation $J_n$ and $\lim J_n=0$.\\
(ii) The functions $h_n$ are uniformly equicontinuous, that is: for all $\epsilon>0$, there is
$\delta>0$ such that for all $h_n$, $|y-x|<\delta$ implies
$|h_n(y)-h_n(x)|<\epsilon$.
\label{def:mix-densityclass}
\end{defn}

Our main result is Theorem \ref{thm:mix-mixing}. It states that CEIM's are mixing. Here is the
definition.

\vskip -0.1in \noindent
\begin{defn} Let $T$ be a transformation with an invariant measure $m$. Then $T$ is mixing with respect
to $m$ if for any two intervals $A$ and $B$
\bsenn
\lim_{n\rightarrow \infty}|m(T^{-n}(A)\cap B)-m(A)\cdot m(B)| =0 \,.
\esenn
\label{defn:mix-mixing}
\end{defn}

%\vskip 0.4in\noindent
\begin{centering}
\section{Proofs}
\label{chap:mix}
\end{centering}
\setcounter{figure}{0} \setcounter{equation}{0}

\vskip -0.2in\noindent
\begin{prop} Let $\{f_n\}_{n=0}^\infty=\{j_n+h_n\}_{n=0}^\infty$ a piecewise equicontinuous family.
Then the sequence $\{f_{n}\}$ has a subsequence that converges uniformly to a continuous function $g$.
\label{thm:mix-arzela}
\end{prop}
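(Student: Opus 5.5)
The plan is to reduce to the Arzel\`a--Ascoli theorem by absorbing the jump parts into small uniform errors. The one difficulty is a missing hypothesis: Definition \ref{def:mix-densityclass} gives equicontinuity but no uniform bound. So I will work under the additional assumption, implicit in the intended application to densities, that the family $\{f_n\}$ is uniformly bounded in sup norm. Without it the statement fails, since $f_n\equiv n$ is a counterexample.

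First, since $j_n$ is constant except for jumps and has total variation $J_n$, pick any point, say $c_n:=j_n(0)$. Then $|j_n(x)-c_n|\le J_n$ for all $x\in[0,1]$. Set
\[
\tilde h_n := h_n + c_n, \qquad \tilde j_n := j_n - c_n ,
\]
so that $f_n=\tilde h_n+\tilde j_n$ and $\|\tilde j_n\|_\infty\le J_n\to 0$. Adding a constant does not change the modulus of continuity, so the family $\{\tilde h_n\}$ is still uniformly equicontinuous with the same $\delta(\epsilon)$ as $\{h_n\}$.

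Next I check uniform boundedness of $\{\tilde h_n\}$. We have $|\tilde h_n|\le |f_n|+J_n$, and $J_n$ is a convergent, hence bounded, sequence. So $\sup_n\|\tilde h_n\|_\infty<\infty$ follows from the boundedness of $\{f_n\}$. This is the only place the extra assumption enters. Under the weaker assumption that $\int|f_n|$ is bounded, I would instead argue as follows. Equicontinuity on the compact interval forces $\max\tilde h_n-\min\tilde h_n$ to be uniformly bounded, by a chain of $\lceil 1/\delta\rceil$ steps each changing $\tilde h_n$ by less than $1$. Hence if $|\tilde h_n|$ were unbounded somewhere it would be uniformly large everywhere, contradicting the $L^1$ bound.

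Then, by Arzel\`a--Ascoli on $[0,1]$, some subsequence $\tilde h_{n_k}$ converges uniformly to a continuous function $g$. Finally,
\[
\|f_{n_k}-g\|_\infty\le \|\tilde h_{n_k}-g\|_\infty+J_{n_k}\to 0,
\]
which proves the proposition. The main obstacle is the boundedness step; everything else is routine once the constants $c_n$ are split off from the jump parts.
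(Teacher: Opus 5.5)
Your argument is correct. You are also right that a uniform bound has to be added: the paper's own proof uses it without justification (``Since the $f_n$ are uniformly bounded''). In the application, Proposition \ref{prop:mix-decay}, the bound holds because $\|f_n\|$ is non-increasing under the weighted-average operator.

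The difference from the paper lies in how Arzel\`a--Ascoli enters. The paper re-runs the diagonal argument directly on the $f_n$. It carries the jump contribution through each estimate as $|f_n(x_i)-f_n(x)|<\epsilon+J_n$, and then checks continuity of $g$ by a separate $\epsilon$-argument. You instead recentre each jump part by $c_n=j_n(0)$, which turns ``small variation'' into ``small in sup norm,'' namely $\|\tilde j_n\|_\infty\le J_n\to 0$. You then apply Arzel\`a--Ascoli to $\tilde h_n$ as a black box, and uniform convergence of $f_{n_k}$ and continuity of $g$ both follow from one triangle inequality.

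Your reduction is shorter. It also avoids the paper's step $J_{k_n}<J_n$, which tacitly assumes $J_n$ is monotone; this is harmless, since $J_{k_n}\to 0$ suffices. The paper's version has the merit of being self-contained and needing no normalization of the $j_n$, but it gains nothing essential over yours.

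Your side remark is also correct: an $L^1$ bound would suffice, via the uniform oscillation bound that equicontinuity gives on $[0,1]$. It is not needed for the paper's application.
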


\vskip 0.0in\noindent
{\bf Proof.} The proof follows largely that of the Arzel\`a-Ascoli theorem. Choose $\epsilon>0$ and
let $\delta$ be such that the family $h_n$ is uniformly $\epsilon$-$\delta$ equicontinuous.
Furthermore, let $X$ be a dense set from which we can choose finitely many $x_i$ a distance less
than $\delta$ apart.

Since the $f_n$ are uniformly bounded, there is a subsequence $f_{1,n}$ of $f_n$ such that
$f_{1,n}(x_1)$ converges to $y_1$. Then we can choose a subsequence $f_{2,n}$ of $f_{1,n}$ such that
$f_{2,n}(x_2)$ converges to $y_2$. And so forth.
Now rename $g_n:=f_{n,n}=f_{k_n}$ for some $k_n\geq n$. Clearly,
\bsenn
\lim_{n\rightarrow \infty} g_n(x_k)=y_k \,.
\esenn
Now for any $x$, take an $x_i$ less than $\delta$ away from $x$. Since $g_n=f_{k_n}$, the
jumps in $g_n$ have variation at most $J_{k_n}<J_n$. Thus $|g_n(x_i)-g_n(x)|<\epsilon+J_{n}$. Also,
$\{g_n(x_i)\}_{n=1}^\infty$ is Cauchy, and thus for $\ell>n$ and $n$ large
$|g_\ell(x_i)-g_n(x_i)|<\epsilon$. Therefore,
\benn
|g_\ell(x)-g_n(x)|&\leq& |g_\ell(x)-g_\ell(x_i)|+|g_\ell(x_i)-g_n(x_i)|+|g_n(x_i)-g_n(x)|\\
&\leq& (\epsilon+J_{n})+\epsilon+(\epsilon+J_{n})< 4\epsilon \,,
\eenn
for $n$ large enough. Thus $g_n(x)$ is Cauchy and uniformly converges to some function $g(x)$.

In fact, $g$ is uniformly continuous, because if $|y-x|<\delta$, then
\benn
|g(y)-g(x)|&\leq& |g(y)-g_n(y)|+|g_n(y)-g_n(x)|+|g_n(x)-g(x)|\\
&\leq& 4\epsilon+(\epsilon+J_{n})+4\epsilon <10\epsilon \,,
\eenn
for $n$ large enough. \QED

Before we prove the next results, we need more notation. The pushforward (or
Ruelle-Perron-Frobenius operator) of a density $\phi_0$ under $T$ is $\phi_1$:
\bsenn
\phi_1(y)=(T_*\phi_0)(y) := \sum_{x\in T^{-1}y} \frac{\phi_0(x)}{|\partial T(x)|} \,,
\esenn
and $\phi_n=T_*\phi_{n-1}$. By Theorem \ref{thm:mix-ergodic}, $T$ is ergodic with respect to
an invariant density $\mu$, which is continuous and bounded from above and below. Define
$f_n(x):=\phi_n(x)/\mu(x)$. We have
\bsenn
f_{n}(y)= \sum_{x\in T^{-1}(y)}f_{n-1}(x)\,\,\frac{\mu(x)}{\mu(T(x))\,|\partial T(x)|} =
\sum_{x\in T^{-n}(y)}f_{0}(x)\,\,\frac{\mu(x)}{\mu(T^n(x))\,|\partial T^n(x)|}\,.
\esenn
By invariance, $1=\frac{1}{\mu(y)}\,\sum\frac{\mu(x)}{|\partial T^n(x)|}$, and thus these operations
are \emph{weighted averages}. The weights associated with $T^n$ (for $n\in \N$) are positive and will
be denoted by $w_n(x)$. Thus
\bse
f_{n}(y) = \sum_{x\in T^{-n}(y)}f_0(x)w_n(x)\,.
\label{eq:weighted}
\ese

The following estimate will be necessary to obtain a modulus of continuity on $f_n$ independent of $n$.

\begin{lem}
There is an increasing real-valued function $\epsilon_w$ with $\lim_{t\to 0}\epsilon_w(t)=0$
such that for all $y$ and $y'$, $n>0$, and $x_i,x_i'$ in the same $n$th level fundamental interval
with $T^n(x_i)=y$ and $T^n(x_i')=y'$,
\bsenn
\sum_i|w_n(x_i)-w_n(x_i')|\leq\epsilon_w(|y-y'|)\,.
\esenn
\vskip -0.2in\noindent
(That is: $\epsilon_w$ is a modulus of continuity for $w_n$ for all $n$.)
\label{lem:weights-ratio}
\end{lem}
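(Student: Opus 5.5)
Throughout, $w_n(x)=\frac{\mu(x)}{\mu(T^n(x))\,|\partial T^n(x)|}$. The sum over $i$ runs over the $n$th level fundamental intervals $I_i$. On each $I_i$ the branch of $T^n$ is a diffeomorphism onto $[0,1]$, so $x_i$ and $x_i'$ are the preimages of $y$ and $y'$ in $I_i$. The plan is to bound each term by
\bsenn
|w_n(x_i)-w_n(x_i')|\leq w_n(x_i)\,\left|1-\frac{w_n(x_i')}{w_n(x_i)}\right|
\esenn
and to show that the ratio $w_n(x_i')/w_n(x_i)$ is close to $1$, uniformly in $n$ and $i$, when $|y-y'|$ is small. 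Since $\sum_i w_n(x_i)=1$ (the weighted-average identity from invariance), the sum is then at most the uniform bound on $|1-\text{ratio}|$, which will serve as $\epsilon_w(|y-y'|)$.

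The ratio splits into three factors:
\bsenn
\frac{w_n(x_i')}{w_n(x_i)}=\frac{\mu(x_i')}{\mu(x_i)}\cdot\frac{\mu(y)}{\mu(y')}\cdot\frac{|\partial T^n(x_i)|}{|\partial T^n(x_i')|}\,.
\esenn

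\emph{The $\mu(y)/\mu(y')$ factor.} By Theorem \ref{thm:mix-ergodic}, $\mu$ is continuous on the compact interval $[0,1]$, hence uniformly continuous with some modulus $\omega_\mu$. Since $\mu\geq K^{-1}$, we get $|\log\mu(y)-\log\mu(y')|\leq K\omega_\mu(|y-y'|)$.

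\emph{The $\mu(x_i')/\mu(x_i)$ factor.} By the same argument it is controlled by $K\omega_\mu(|x_i-x_i'|)$. Both points lie in $I_i$, and by (\ref{eq:distortion2}),
\bsenn
|x_i-x_i'|\leq |y-y'|\,\sup_{I_i}|\partial T^n|^{-1}\leq D^2|y-y'|\,|I_i|\leq D^2|y-y'| .
\esenn
So this factor is controlled by $K\omega_\mu(D^2|y-y'|)$.

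\emph{The derivative factor.} This is the main step, and it uses the bounded nonlinearity (\ref{eq:distortion1}) of $T^n$. Let $\psi=(T^n|_{I_i})^{-1}$ and write $\log|\partial T^n(x_i)|-\log|\partial T^n(x_i')|$ as the integral of $\partial^2T^n/\partial T^n$ along $I_i$ from $x_i'$ to $x_i$. Change variables to $s=T^n(x)$, using $dx=ds/\partial T^n(x)$. The integrand becomes $\frac{\partial^2T^n(x)}{\partial T^n(x)\,\partial T^n(x)}$, which by (\ref{eq:distortion1}) with $z=u=x$ is bounded by $D$. Hence
\bsenn
\left|\log\frac{|\partial T^n(x_i)|}{|\partial T^n(x_i')|}\right|\leq D|y-y'| .
\esenn
This bound is uniform in $n$ and $i$ because the $s$-interval has length $|y-y'|$ regardless of the branch. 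This is precisely why (\ref{eq:distortion1}) is normalized with two derivatives in the denominator.

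\emph{Combining.} Adding the three logarithmic bounds gives $|\log(w_n(x_i')/w_n(x_i))|\leq\eta(|y-y'|)$, where
\bsenn
\eta(t)=K\omega_\mu(t)+K\omega_\mu(D^2t)+Dt .
\esenn
Then $|1-\text{ratio}|\leq e^{\eta}-1$, and we set $\epsilon_w(t)=e^{\eta(t)}-1$. It is increasing, after replacing $\omega_\mu$ by its increasing envelope $\sup_{s\le t}$ if necessary, and it tends to $0$ as $t\to0$.

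The only point needing care is the change of variables in the derivative factor. We must confirm that (\ref{eq:distortion1}) applies along the whole segment between $x_i$ and $x_i'$, which is true because that segment lies inside the single fundamental interval $I_i$. We also need the sign of $\partial T^n$ to be constant there, so that the logarithm of the absolute value is differentiable; this follows because $T^n$ is a $C^2$ diffeomorphism on $I_i$.
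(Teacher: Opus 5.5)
Your proof is correct and follows essentially the paper's route. You split $w_n$ into the factors $1/\mu(y)$, $\mu(x_i)$ and $1/|\partial T^n(x_i)|$, control the two $\mu$-factors by uniform continuity of $\mu$ (bounded below), and control the derivative factor by the same change-of-variables distortion estimate $\bigl|\ln\bigl(|\partial T^n(x_i)|/|\partial T^n(x_i')|\bigr)\bigr|\le D|y-y'|$. You combine the factors multiplicatively, through the ratio and $\sum_i w_n(x_i)=1$, rather than by the paper's telescoping of $abc-a'b'c'$; your bound on $|x_i-x_i'|$ via \eqref{eq:distortion2} is more careful than the paper's unqualified $|x_i-x_i'|\le|y-y'|$, which would need $|\partial T^n|\ge 1$.
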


\vskip -0.1in\noindent
{\bf Proof.} First, for notational convenience we define $a:=1/\mu(y)=1/\mu(T^n(x_i))$, $b:=\mu(x_i)$, and
$c:=1/|\partial T^n(x_i)|$, and similarly, $a':=1/\mu(y')=1/\mu(T^n(x_i'))$, et cetera. Taking the sum over
all $n$th-level fundamental intervals, we have
\bsenn
\sum_i|abc-a'b'c'|\leq \sum_i|(a-a')bc|+\sum_i|(b-b')a'c|+\sum_i|(c-c')a'b'| \,.
\esenn
Now, $a$ and $b$ are uniformly bounded (recall that $\mu$ is bounded away from zero) and the sum $\sum_i |c|$
is bounded by $\sum_i D|I_i|=D$. In the first term on the right hand side $|a-a'|$ has a modulus of continuity
inherited from $\mu(y)$. In the second term, we have a weighted sum of moduli $\epsilon_b(|x_i-x_i'|)$ of
$b$. Since $|x_i-x_i'|\leq |y-y'|$, we assert that $\epsilon_b(|y-y'|)$ serves as modulus for this term.

We need to find a modulus for the third term. Rewrite it as $\sum_i|(c/c'-1)a'b'c'|$.
It now is a weighted average of $|c/c'-1|$ which, in turn, we estimate using \eqref{eq:distortion1}
and \eqref{eq:distortion2}.
\bsenn
\left|\ln\frac{|\partial T^n(x)|}{|\partial T^n(x')|}\right|=|\ln|\partial T^n(x)|-\ln|\partial T^n(x')||=\left|\int_{x'}^x \frac{\partial^2 T^n(z)}{(\partial T^n(z))^2}\cdot\partial T^n(z)dz\right|\leq D|y-y'| \,.
\esenn
\bsenn
\Longrightarrow\quad\quad \quad e^{-D|y-y'|}\leq \frac{|\partial T^n(x)|}{|\partial T^n(x')|} \leq e^{D|y-y'|}\,.\quad \quad\quad \quad\quad \quad\quad \quad
 \esenn
And this imparts a modulus of continuity $\epsilon_C(|y-y'|)$ on the third term.
\QED

In what follows, we will decompose $f_n$ into $j_n+h_n$, jumps plus a continuous function. To facilitate
the proof of the next lemma, it is helpful to preface it with an exercise in bookkeeping. Consider the
situation if in Figure \ref{fig:mix-disconts}. We calculate the continuous part of the difference
$f(x)w(x)-f(x')w(x')$, where $w(x)$ is continuous in the interval $[x,x']$ but $f$ has two jump
discontinuities:
\bsenn
\left[f(x)w(x)-f(x^{1-})w(x^{1-})\right]+\left[f(x^{1+})w(x^{1+})-f(x^{2-})w(x^{2-})\right]+\left[f(x^{2+})w(x^{2+})-f(x')w(x') \right]\,.
\esenn
Now take a limit as $x^{j+}$ and $x^{j-}$ tend to $x^j$, the locus of the discontinuity, and denote
the jump in $f$ at $x^j$ by $J^j$. We use continuity of $w(x)$ to get
\benn
\cdots &=& f(x)w(x) + \sum_j J^jw(x^j)-f(x')w(x') \\
   &=& \left[f(x)-(f(x')-\sum_jJ^j)\right]w(x) + f(x')(w(x)-w(x'))+\sum_jJ^j(w(x^j)-w(x)) ,.
\eenn
Finally, we note that $f(x)-(f(x')-\sum_jJ^j)$ can be replaced by $h(x)-h(x')$ if $h$ is the continuous
part of $f$.

\begin{figure}[!ht]
\centering
\includegraphics[width=3.0in]{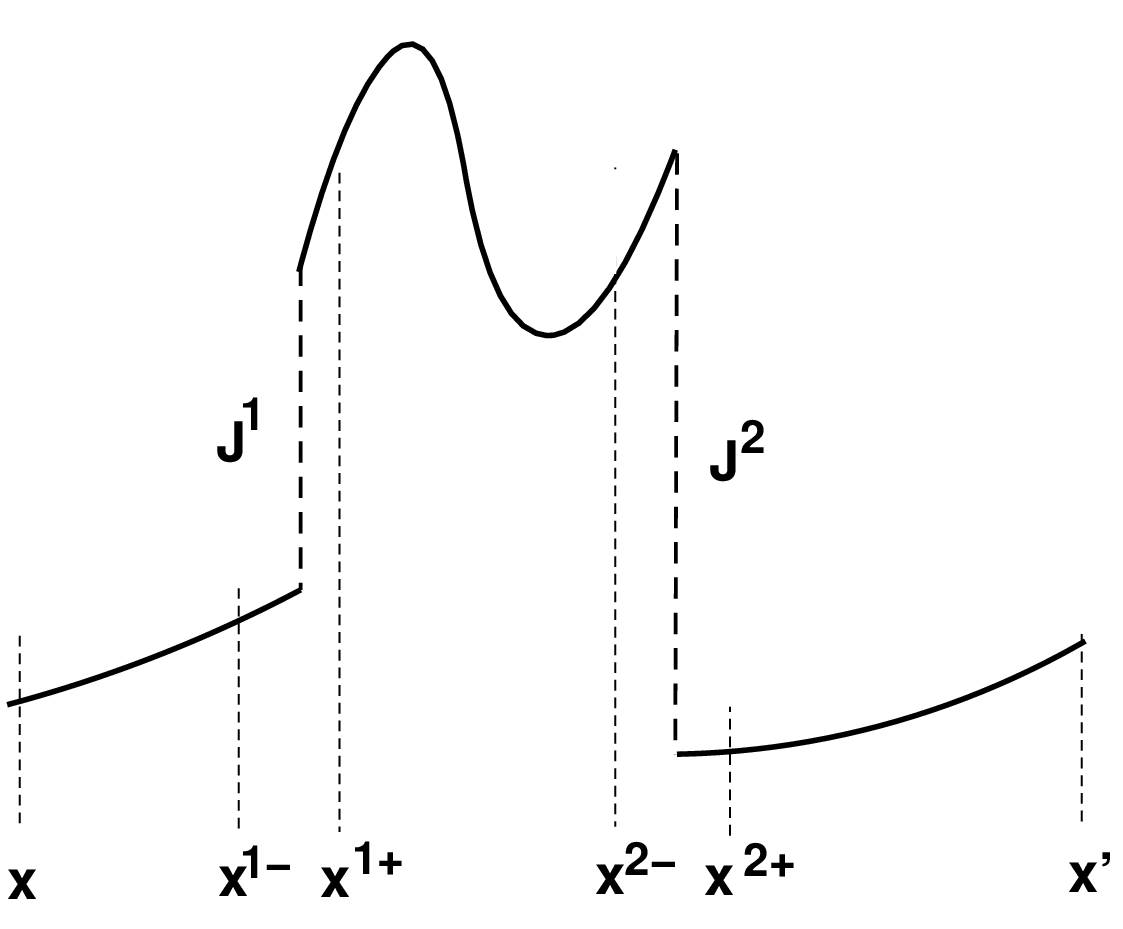}
\caption{\emph{Sketch of a function $f$ that is the sum of a continuous function $h$ and two jumps $J^j$.}}
\label{fig:mix-disconts}
\end{figure}

\vskip -0.2in\noindent
\begin{prop} Let $j_0$ be piecewise constant with finitely many finite jumps and $h_0$ continuous.
Then $h_n$ defined in \eqref{eq:weighted} has a modulus of continuity which is independent of $n$.
\label{prop:mix-modulus}
\end{prop}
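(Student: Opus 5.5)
We will prove the statement by applying the bookkeeping identity preceding the statement to each inverse branch of $T^n$ and summing over branches. First we fix the decomposition. Write $f_n=j_n+h_n$, where $j_n$ is the pure-jump part of $f_n=\sum_{x\in T^{-n}(y)} f_0(x)w_n(x)$. The discontinuities of $f_n$ come from two sources. One is the images under $T^n$ of the finitely many jump points $x^j$ of $j_0$. The other is the endpoints of the $n$th level fundamental intervals; since each branch maps onto $[0,1]$, these can only produce discontinuities at $y=0,1$ and are harmless. So a jump of $j_0$ of size $J^j$ at $x^j$, lying in the branch through $x^j$, produces a jump of size $J^j w_n(x^j)$ at $T^n(x^j)$.

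Fix $y<y'$ and index the preimages by branches: $x_i,x_i'$ in the same $n$th level fundamental interval with $T^n(x_i)=y$ and $T^n(x_i')=y'$. On each branch $w_n$ is continuous, so the bookkeeping identity applies on $[x_i,x_i']$. Summing over $i$ gives
\begin{eqnarray*}
h_n(y)-h_n(y') &=& \sum_i \bigl(h_0(x_i)-h_0(x_i')\bigr)w_n(x_i) + \sum_i f_0(x_i')\bigl(w_n(x_i)-w_n(x_i')\bigr)\\
&& +\sum_i\sum_{x^j\in[x_i,x_i']} J^j\bigl(w_n(x^j)-w_n(x_i)\bigr).
\end{eqnarray*}
We bound the three sums separately.

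\textbf{First sum.} The weights $w_n(x_i)$ sum to $1$, and $|x_i-x_i'|\leq C^{-1}\Delta^{-n}|y-y'|\leq C^{-1}|y-y'|$ by \eqref{eq:mix-expanding}. Hence this sum is at most $\epsilon_{h_0}(C^{-1}|y-y'|)$, where $\epsilon_{h_0}$ is the modulus of continuity of $h_0$ on $[0,1]$. This bound is independent of $n$.

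\textbf{Second sum.} The function $f_0$ is bounded by $\|h_0\|_\infty+\mathrm{Var}(j_0)$. Lemma \ref{lem:weights-ratio} then bounds this sum by $\|f_0\|_\infty\,\epsilon_w(|y-y'|)$.

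\textbf{Third sum.} This is the step needing care, because there are infinitely many branches and we need a bound uniform in $n$. Each jump point $x^j$ lies in exactly one $n$th level interval, so the double sum has at most $N$ nonzero terms, where $N$ is the number of jumps of $j_0$. For each such term, both $w_n(x^j)$ and $w_n(x_i)$ are evaluated in the same branch, at points whose images lie within $|y-y'|$ of each other. Lemma \ref{lem:weights-ratio}, applied with the single pair $(x^j,x_i)$, gives $|w_n(x^j)-w_n(x_i)|\leq\epsilon_w(|y-y'|)$. The sum is therefore bounded by $\sum_j|J^j|\,\epsilon_w(|y-y'|)=\mathrm{Var}(j_0)\,\epsilon_w(|y-y'|)$. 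If one worries about applying the lemma to a single pair rather than to a sum, the same argument as in its proof works directly. The ingredients are: continuity of $\mu$ for the factors $a$ and $b$; the distortion bound $e^{\pm D|y-y'|}$ on the ratio of derivatives; and \eqref{eq:distortion2} together with the boundedness of $\mu$, which give $w_n\leq K^2D|I_i|\leq K^2D$.

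Adding the three bounds gives
$$\epsilon_{h_0}(C^{-1}t)+\bigl(\|f_0\|_\infty+\mathrm{Var}(j_0)\bigr)\epsilon_w(t),$$
which is a modulus of continuity for $h_n$ independent of $n$. The main obstacle is the bookkeeping in the third sum: justifying that the continuous part of $f_n$ is exactly what remains after subtracting the transported jumps $J^jw_n(x^j)$, and checking that the endpoints of the fundamental intervals contribute nothing.
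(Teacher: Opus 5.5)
Your proof is correct and follows the paper's argument. The paper also sums the bookkeeping identity over branches, then bounds the first sum by the modulus of $h_0$, the second by $\|f_0\|_\infty\,\epsilon_w$ from Lemma \ref{lem:weights-ratio}, and the third by $\mathrm{Var}(j_0)\,\epsilon_w$. Your use of \eqref{eq:mix-expanding} to get $|x_i-x_i'|\le C^{-1}|y-y'|$ is slightly more careful than the paper's tacit $|x_i-x_i'|\le|y-y'|$, which needs $C\Delta^n\ge 1$. Note also that $\|f_0\|_\infty\le\|h_0\|_\infty+\mathrm{Var}(j_0)$ holds only after you normalize $j_0(0)=0$, which you may do since an added constant can be absorbed into $h_0$.
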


\vskip 0.0in\noindent
{\bf Proof.} By Equation \eqref{eq:weighted}, we have
\bsenn
|f_n(y)-f_n(y')| = \left|\sum_{i}f_0(x_i)w_n(x_i)-f_0(x_i')w_n(x_i')\right| \,,
\esenn
where $\sum_{i}$ indicates, as before, that $x_i'\in T^{-n}(y')$ and $x_i\in T^{-n}(y)$ are in the same $n$th
level fundamental interval $I_i$. We wish to obtain a modulus of continuity for the continuous part $h_n$ of
$f_n$ that is independent of $n$. Denote the jumps of $f_0$ in the interval $[x_i,x_i']$ by $J_i^j$ and
the loci of the jumps by $x_i^j$.  By the observations prior to this lemma, we obtain
\bsenn
h_n(y)-h_n(y') = \sum_{i}(h_0(x_i)-h_0(x_i'))w_n(x_i) +\sum_{i}f_0(x_i')(w_n(x_i)-w_n(x_i'))+
\sum_{i}\sum_{j}J_i^j(w_n(x_i^j)-w_n(x_i)) \,.
\esenn
We now give a modulus for each of the terms on the right hand side. The first term consists of a weighted
sum of moduli $\epsilon_0(|x_i-x_i'|)$ of $h_0$. Thus $\epsilon_0(|y-y'|)$ serves as modulus for this term.
The second term has $\epsilon(t)$ of Lemma \ref{lem:weights-ratio} multiplied by $||f_0||$ as a modulus.
The absolute value of the third term is bounded by the variation of $j_0$ times the modulus
computed in Lemma \ref{lem:weights-ratio}. \QED

\vskip -0.3in\noindent
\begin{prop} If $\phi$ is the sum of a continuous function and a finite jump function and
$\int \phi=0$, then $\lim_{n\rightarrow \infty}||T_*^n\phi||=0$, where $||.||$ denotes the supnorm.
\label{prop:mix-decay}
\end{prop}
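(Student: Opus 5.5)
The plan is to work with $f_n=\phi_n/\mu$ instead of $\phi_n$ itself. Since $K^{-1}\le\mu\le K$, it suffices to show $\|f_n\|\to0$. By \eqref{eq:weighted}, $f_n$ is obtained from $f_0$ by the normalized operator $L$, $(Lf)(y)=\sum_{x\in T^{-1}y}f(x)w_1(x)$, whose weights are positive and sum to one. Two facts follow immediately. First, $\|Lf\|\le\|f\|$, so $M_n:=\sup f_n$ is nonincreasing and $m_n:=\inf f_n$ is nondecreasing; in particular the $f_n$ are uniformly bounded. Second, $\int f_n\,\mu=\int\phi_n=\int\phi=0$ for all $n$, because $T_*$ preserves integrals. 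Let $M=\lim M_n$ and $m=\lim m_n$. The claim is exactly $M=m=0$; since $\int f_n\mu=0$ forces $m_n\le0\le M_n$, it is enough to rule out $M>0$ (the case $m<0$ is symmetric).

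Next I would show that $\{f_n\}$ is a piecewise equicontinuous family, so that Proposition \ref{thm:mix-arzela} applies. Write $f_0=j_0+h_0$ with $j_0$ having finitely many jumps of total size $V$. Completeness of $T$ means each $n$th level fundamental interval is mapped onto $[0,1]$, so no new interior discontinuities are created. Consequently $f_n$ has a jump only at the points $T^n(x^j)$, where $x^j$ are the jump loci of $f_0$. A jump of size $J^j$ at $x^j$ lying in $I_i$ produces a jump of size $J^j w_n(x^j)$. By \eqref{eq:distortion2} and the bounds on $\mu$, $w_n(x^j)\le K^2D|I_i|\le K^2D\max_i|I_i|$. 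Property (i) of a CEIM then gives $\max_i|I_i|\le D\,C^{-1}\Delta^{-n}$, so $J_n\le K^2D^2C^{-1}V\Delta^{-n}\to0$. Equicontinuity of the $h_n$ is Proposition \ref{prop:mix-modulus}. Hence some subsequence $f_{n_k}$ converges uniformly to a continuous $g$ with $\int g\mu=0$. Because $L$ is a sup-norm contraction, $f_{n_k+p}\to L^pg$ uniformly for each fixed $p$. Therefore $\sup L^pg=M$ and $\inf L^pg=m$ for every $p\ge0$.

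To conclude, suppose $M>0$. Since $g$ is continuous with $\sup g=M>0$ and $\int g\mu=0$, $g$ is not constant. So there are $\eta>0$ and an interval $U$ of positive length with $g\le M-\eta$ on $U$. For $p$ large, every $p$th level fundamental interval has length less than $|U|/4$. Then the $p$th level intervals contained in $U$ have total length at least $|U|/2$. For any $y$, each such interval contains exactly one preimage $x_i$ of $y$ under $T^p$, and its weight satisfies $w_p(x_i)\ge K^{-2}D^{-1}|I_i|$. Hence the preimages of $y$ lying in $U$ carry total weight at least $c:=K^{-2}D^{-1}|U|/2$, independently of $y$. This gives $L^pg(y)\le M-c\eta$ for all $y$, contradicting $\sup L^pg=M$. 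So $M=0$, symmetrically $m=0$, and therefore $\|T^n_*\phi\|\le K\|f_n\|\to0$.

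I expect the main obstacle to be the bookkeeping showing that the jump part has variation $J_n\to0$, i.e.\ that the family really is piecewise equicontinuous. This needs care about points where a jump locus of $f_0$ coincides with an endpoint of a fundamental interval, and about the boundary values at $0$ and $1$. I would handle it by assigning each such point to one side, which costs at most a bounded multiple of the same estimate. The passage from the subsequence to $L^pg$, using that $L$ is a contraction, also has to be stated carefully, since it is what transfers the monotone limits $M,m$ to every iterate of $g$.
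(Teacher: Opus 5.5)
Your proposal is correct and takes essentially the same route as the paper. Both use sup-norm monotonicity of the normalized weighted-average operator, piecewise equicontinuity via exponentially shrinking jumps together with Proposition \ref{prop:mix-modulus}, and a uniformly convergent subsequence with continuous limit $g$. Both then reach a contradiction because an iterate of $g$ averages over an interval where $g$ stays away from its extreme value. Two refinements of yours just make the paper's step ``some $m$th iterate of $\widetilde T_*$ must sample $g$ in precisely that interval'' precise: you track $\sup$ and $\inf$ separately (so you need only a point where $g<M$ rather than a zero of $g$), and you give the explicit uniform lower bound $c=K^{-2}D^{-1}|U|/2$ on the weight carried by $U$.
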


\vskip 0.0in\noindent
{\bf Proof.} Denote the operator introduced in \eqref{eq:weighted} by $\widetilde T_*^n$.
Since it is a weighted average, $||f_{n+1}||=||\widetilde T_*f_n||\leq ||f_n||$. Thus
$||f_n||$ is positive and decreasing and so $\lim_{n\rightarrow \infty} ||f_n||$ exists.
Call that limit $L$. We assume $L>0$ and derive a contradiction.

Consider \eqref{eq:weighted} and decompose $f_n$ as before into $j_n+h_n$.
The derivatives $|\partial T^n(x)|$ increase exponentially, and therefore the jumps in $j_n$ decrease
exponentially. The $h_n$ are uniformly equicontinuous by Proposition \ref{prop:mix-modulus}. Thus by
Proposition \ref{thm:mix-arzela}, the $f_n$ have a subsequence $f_{n_i}$ uniformly convergent to a
continuous function $g$.

By assumption $||g||=L>0$. But since $g$ has average zero,
$g(x)$ must alternate sign, and therefore by continuity it must assume the value zero at some point.
Again by continuity, there is an interval around that point where $|g(x)|<L/2$. Some $m$th iterate
of $\widetilde T_*$ must sample $g$ in precisely that interval. But then there is a $\eta>0$ so that
\bsenn
||\widetilde T_*^mg||\leq L-2\eta \,.
\esenn
Now by linearity,
\bsenn
||\widetilde T_*^mf_{n_i}||=||\widetilde T_*^mg+\widetilde T_*^m(f_{n_i}-g)||\leq ||\widetilde T_*^mg||+
||\widetilde T_*^m(f_{n_i}-g)||\leq L-2\eta + ||\widetilde T_*^m(f_{n_i}-g)||\,.
\esenn
But we can take $n$ large enough so that $||f_{n_i}-g||<\eta$. Since $\widetilde T_*$ cannot increase
the supnorm, we finally obtain $||\widetilde T_*^mf_{n_i}||\leq L-\eta$, contradicting that
$||f_{n_i}||\geq L>0$. \QED

\vskip -0.2in \noindent
\begin{theo}[CEIM's are mixing] Let $T$ be a CEIM with invariant measure $m$, absolutely continuous
with respect to Lebesgue. Then $T$ is mixing with respect to $m$.
\label{thm:mix-mixing}
\end{theo}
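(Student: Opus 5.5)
The plan is to reduce mixing to Proposition \ref{prop:mix-decay}, applied to a density built from the interval $B$. Fix intervals $A$ and $B$. Let $m$ be the invariant measure of Theorem \ref{thm:mix-ergodic}, with density $\mu$; by the uniqueness part of that theorem, this is the measure in the statement. We write
\bsenn
m(T^{-n}(A)\cap B)=\int \chi_A(T^n x)\,\chi_B(x)\,\mu(x)\,dx=\int_A T_*^n(\chi_B\mu)(y)\,dy\,,
\esenn
where the second equality is the defining duality of the pushforward (change of variables on each branch of $T^n$, i.e.\ on each $n$th level fundamental interval).

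Next set $\phi_0:=\chi_B\mu-m(B)\mu$. This function is the sum of the continuous function $-m(B)\mu+\mu\cdot(\text{continuous})$ and a jump function. More precisely, $\chi_B\mu$ is not literally a constant-jump function plus a continuous function, so I would write it as follows. Let $j_0$ be $\mu(b_0)$ times the jump at the left endpoint $b_0$ of $B$, and $-\mu(b_1)$ times the jump at the right endpoint $b_1$, arranged so that $j_0$ is piecewise constant with two jumps. Then $\chi_B\mu-j_0$ is continuous, because at each endpoint the jump of $\chi_B\mu$ equals $\pm\mu(b_k)$, which is exactly what $j_0$ removes. Also $\int\phi_0=m(B)-m(B)\int\mu=0$. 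Hence Proposition \ref{prop:mix-decay} applies and gives $\|T_*^n\phi_0\|\to 0$. Since $\mu$ is invariant, $T_*^n\mu=\mu$, and therefore
\bsenn
\left|m(T^{-n}(A)\cap B)-m(A)m(B)\right|=\left|\int_A T_*^n\phi_0\,dy\right|\leq |A|\,\|T_*^n\phi_0\|\to 0\,.
\esenn
This proves the theorem.

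The steps, in order, are: (1) the duality identity for $T_*$; (2) the decomposition of $\chi_B\mu$ into a jump part and a continuous part; (3) checking that $\phi_0$ has integral zero; (4) invoking Proposition \ref{prop:mix-decay} together with invariance of $\mu$.

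The main obstacle I expect is (1), together with its compatibility with the hypotheses of Proposition \ref{prop:mix-decay}. With countably many branches, interchanging the sum over preimages with the integral needs justification, which I would handle by monotone convergence since all terms are nonnegative. I would also double-check that Proposition \ref{prop:mix-decay} is really stated for $T_*^n\phi$ and not for $f_n=\phi_n/\mu$. If it is the latter, I would apply it to $f_0=\phi_0/\mu=\chi_B-m(B)$, which is a genuine finite jump function, and use $\int f_0\,\mu=0$ as the zero-average condition. The bound then becomes $\int_A |f_n|\mu\leq K|A|\,\|f_n\|$, using property (ii) of Theorem \ref{thm:mix-ergodic}.
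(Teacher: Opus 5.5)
Your proposal is correct and is essentially the paper's proof: subtract $m(B)\mu$ from $\chi_B\mu$ using $T_*$-invariance of $\mu$, note that the result is continuous plus finitely many jumps with zero integral, and conclude from Proposition \ref{prop:mix-decay} via the sup-norm bound. Your explicit jump decomposition and your orientation of the duality, $m(T^{-n}(A)\cap B)=\int_A T_*^n(\chi_B\mu)\,dy$, are more careful than the paper's version, which pushes forward ${\bf 1}_A\mu$ and integrates over $B$; that slip is harmless, since the roles of the arbitrary intervals $A$ and $B$ can be exchanged.
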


\vskip -0.1in\noindent
{\bf Proof.}
By Theorem \ref{thm:mix-ergodic}, $T$ has invariant density $\mu$ and invariant measure $m$. Given an
interval $A$, ${\bf 1}_A$ is the characteristic function on $A$ with $m$-length $m_A$. Then
\benn
\left| m(T^{-n}(A)\cap B)-m_A m_B\right|&=&\left|\int_{T^{-n}(A)\cap B}\mu(y)dy-\int_B m_A
\mu(y)dy\right|\\
&=& \left|\int_{B}T_*^{n}({\bf 1}_A\cdot \mu)(y)dy-\int_B m_A \mu(y)dy\right| \,.
\eenn
Since $\mu$ is invariant under $T_*$, we may replace the integrand by $(T_*^n[{\bf 1}_A\mu-m_A\mu])(y)$.
Thus
\bsenn
\left| m(T^{-n}(A)\cap B)-m_A m_B\right| \leq \int |(T_*^n[{\bf 1}_A\mu-m_A\mu])(y)|\,dy \,.
\esenn
By Theorem \ref{thm:mix-ergodic}, $\mu$ is continu and thus ${\bf 1}_A\mu-m_A\mu$
is the sum of continuous plus jumps and therefore its iterates satisfy Definition
\ref{def:mix-densityclass}. Proposition \ref{prop:mix-decay} now implies the theorem. \QED

\vspace{\fill}
\end{document}